\newcommand{\R}{\mathbb{R}}
\newcommand{\N}{\mathbb{N}}
\newcommand{\norm}[1]{\lVert{#1}\rVert}
\begin{document}

\title*{Global Stability Analysis of Nonlinear Sampled-Data Systems using Convex Methods}
 \titlerunning{Analysis of Nonlinear Sampled-Data Systems}

\author{Matthew M. Peet and Alexandre Seuret}
\institute{Matthew M. Peet \at Arizona State University, PO Box 876106, Tempe, AZ 85287, \email{mpeet@asu.edu}
\and Alexandre Seuret \at CNRS, LAAS, 7 avenue du Colonel Roche, 31077 Toulouse, France.
Univ. de Toulouse, LAAS, F-31400 Toulouse, France.
 \email{ aseuret@laas.fr}}

%
%
\maketitle

\abstract{In this chapter, we consider the problem of global stability of nonlinear sampled-data systems. Sampled-data systems are a form of hybrid model which arises when discrete measurements and updates are used to control continuous-time plants. In this paper, we use a recently introduced Lyapunov approach to derive stability conditions for both the case of fixed sampling period (synchronous) and the case of a time-varying sampling period (asynchronous). This approach requires the existence of a Lyapunov function which decreases over each sampling interval. To enforce this constraint, we use a form of slack variable which exists over the sampling period, may depend on the sampling period, and allows the Lyapunov function to be temporarily increasing. The resulting conditions are enforced using a new method of convex optimization of polynomial variables known as Sum-of-Squares. We use several numerical examples to illustrate this approach.}

\section{Introduction to the Problem of Stability of Sampled-Data Systems}
\label{sec:1}
Consider an aircraft in combat being remotely piloted by an operator. Directed energy or some other form of electronic warfare is used to deny portions of the electromagnetic spectrum and thus reduce the communication bandwidth between vehicle and operator. The change in bandwidth restricts the rate at which information can be transmitted to the vehicle. The question we ask is what is the minimum rate of transfer of information the aircraft can tolerate before it becomes unstable. This situation is similar to the use of electronic countermeasures on an active radar-guided missile. In both cases, there is a set of continuous-time dynamics representing the dynamics of the controlled system. These continuous dynamics are regulated by continuous real-time feedback using digital sensing and actuation. During normal operation, the controller is updated continuously and so the digitization of the controller does not affect the dynamics of the closed-loop system. When interference occurs, however, the update rate of the controller may be sparse or unpredictable. In this case, the system becomes neither discrete nor continuous, but rather a special type of hybrid system referred to as a \textit{Sampled-Data} system, modeled as
\[
\dot x(t) = f(x(t),x(t_k))\qquad \text{for }t\in [t_k,t_k+T_k],\quad k=1,\cdots,\infty.
\]
where $t_{k+1}=t_k + T_k$ for all $k$ and $T_k$ is the sampling period which may be constant or may depend on $k$. Typically, systems of this form arise when the dynamics depend on external updates - often through the use of a controller so that $f(x(t),x(t_k)) = f^* (x(t),u(t))$ with $u(t) = k(x(t_k))$. The sampling period $T_k$ may be thought of as the time between updates from an external controller. In the scenarios described above, $T_k$ would vary with $k$ - the so-called `asynchronous' case. However, there are certain situations when $T_k$ may not vary from update to update - such as when as when the controller is implemented using using an A/D converter with step-times. We refer to this situation as the `synchronous' case.

Linear Sampled-Data systems have been well-studied in the literature~\cite{ChenFran:1995,FRISEURIC:AUT:04,Fujioka:Autom:2009,
ZhanBranPhi:IEEECSM:2001}, including work on nonlinear systems in~\cite{jury1964stability,zaccarian2003finite}. One popular approach has been to regard the system in continuous-time and use a discontinuous, time-varying delay to represent the hybrid part of the dynamics~\cite{mikheev1988asymptotic}. Unfortunately, this approach has not been completely successful, as the understanding of nonlinear systems with time-varying delay is itself a difficult problem. An alternative approach has been to regard the system in discrete time~\cite{Suh:Auto:2008,OishiFujioka:CDC:2009,
hetel_06}, where the update law is given by the solution map of the continuous-time system over a period $T_k$. For a linear system, this solution map is well-defined using matrix exponentials. For nonlinear systems, it can be approximated over bounded intervals using methods such as the extended Picard iteration~\cite{Peet_2012TAC}. The difficulty with this approach is that the update law is different for every sampling period - meaning that although the approach may work well for a fixed sampling period, for unknown and time-varying sampling period, one has to verify stability over a family of potential solutions. Even in the linear case, this means verification of stability with parametric uncertainty which enters through the exponential. If we have a nonlinear sampled-data system, then even if the vector field is polynomial, the extended Picard iteration yields a polynomial approximation to the solution map - meaning we must test stability of a complicated polynomial vector field with parametric uncertainty - an NP-hard problem.

In this chapter, we consider the use of a new Lyapunov-based approach to stability analysis of sampled-data systems. Specifically, we rely on a Lyapunov result which states that while Lyapunov functions must experience a net decrease over the sampling period, it may be instantaneously increasing~\cite{seuret:auto:2010}. This constraint can be implemented in a Lyapunov context through the use of `spacing functions' - functions which are required to vanish at the endpoints of the sampling period. The main idea behind these functions is that instead of requiring negativity of the Lyapunov function over the entire sampling interval, we only require the sum of the Lyapunov function and the spacing function to be decreasing for all time. The inspiration for this approach came from the previous work on spacing functions for Lyapunov-Krasovskii functionals for stability of time-delay systems in~\cite{peet_2009SICON}. In~\cite{seuret_2013TAC}, we considered the use of this approach for construction of quadratic Lyapunov functions for linear sampled-data systems in both the synchronous and asynchronous cases. The contribution of this chapter is to show how this approach can be extended to prove global stability of nonlinear sampled-data systems.

This chapter is organized as follows. In Section~\ref{sec:2}, we introduce the sampled-data system model and define our concepts of stability. We then give the Lyapunov theorem whose conditions we will test. We then introduce the Sum-of-Squares approach to optimization of polynomial variables, including the use of Positivstellensatz results to enforce local positivity. In Section~\ref{sec:main}, we show how the Sum-of-Squares framework can be used to enforce the stability conditions of Section~\ref{sec:2}. Finally, in Section~\ref{sec:numeric}, we apply the results of the chapter to several cases of nonlinear stability analysis in both the synchronous and asynchronous cases.

\section{Background}
\label{sec:2}

In this section we will first describe the Lyapunov theorem we will use and discuss the conditions that a Lyapunov function must satisfy. Following this, we will briefly discuss the computational framework we will use to enforce the conditions of the Lyapunov theorem. Specifically, we will give background on optimization of polynomials using the Sum-of-Squares methodology (SOS).

\subsection{Sampled-Data Systems}
In this chapter, we consider the stability of solutions of equations of the form
\begin{align}
\dot x(t) &= f(x(t),x(t_k))\qquad \text{for }t\in [t_k,t_k+T_k],\quad k=1,\cdots,\infty.\notag \\
x(t) &= x_0 \label{eqn:system}
\end{align}
where $t_0=0$, $t_{k+1}=t_k + T_k$ for all $k\ge0$ and $T_k$ is the sampling period which may be constant or may depend on $k$. We assume that $T_k$ satisfies some upper bound $T_k \le T_{\max}$ for all $k$. When it exists, we define the continuous-time flow-map $\Gamma(s)$ to be any function which satisfies $\frac{d}{ds}\Gamma(s)z = f(\Gamma(s)z,z)$ for all $s \in [0,T_{\max}]$ and $\Gamma(0)z = z$. If $\Gamma$ exists, then the sampled-data system can be reduced to a discrete-time system as $x_{k+1} = \Gamma(T_k)x_k$. For the linear sampled-data system
\[
\dot x(t) = A_0 x(t)+A_1 x(t_k),
\] we have the explicit solution
\[
\Gamma(s)z = \left(e^{A_0s} + \int_0^se^{A_0(s-\theta)}A_1 d\theta\right)\,z.
\] For a nonlinear system, the solution map $\Gamma$ is difficult to find - although it may be approximated using such methods as Picard iteration.

\begin{definition} We say the Sampled-Data System~\eqref{eqn:system} is globally exponentially stable if there exist positive constants $K, \gamma$ such that for any initial condition $x_0\in \R^n$, and any $x$ satisfying ~\eqref{eqn:system}, we have $\norm{x(t)}\le K\norm{x_0}e^{-\gamma t}$ for all $t\ge 0$.
\end{definition}
For a linear sampled-data system with a uniform bound on $T_k$, global exponential stability is equivalent to $\rho\left(e^{A_0s} + \int_0^se^{A_0(s-\theta)}A_1 d\theta\right)<1$.

\begin{definition} We say the Sampled-Data System~\eqref{eqn:system} is locally exponentially stable on domain $X$ if there exist positive constants $K, \gamma$ such that for any initial condition $x_0\in X$, and any $x$ satisfying ~\eqref{eqn:system}, we have $x(t) \in X$ and $\norm{x(t)}\le K\norm{x_0}e^{-\gamma t}$ for all $t\ge 0$.
\end{definition}

\subsection{A Lyapunov Theorem}

In this theorem, we assume global existence and continuity of solutions.

\textbf{Notation:} For a given solution, $x$, of System~\eqref{eqn:system}, define the function $x_{k}(s) =\Gamma(s)x(t_k)$ for $s \in [0,T_k]$. Associated with $x_k \in \mathcal{C}[0,T_k]$, we denote the supremum norm $\|x_{k}\|_\infty=\sup_{s\in [0,~ T_{\max}]}\norm{x_{k}(s)}$.

\begin{theorem}\label{thm:Lyapunov}\cite{seuret:auto:2010}
Suppose $V: \R^n\rightarrow \R^+$ is continuously differentiable and
\begin{equation}
\mu_1\norm{x}^2 \leq  V(x) \leq \mu_2\norm{x}^2, \quad \text{for all }x \in \R^n.\label{Th11}
\end{equation}
for positive scalars $\mu_1, \mu_2$ with $\mu_1>\mu_2>0$. Then for any positive constants $\alpha, T_{\min}$ and $T_{\max}$ such that $T_k:=t_{k+1}-t_k \in [T_{\min},T_{\max}]$ for all $k \in \N$, the following are equivalent.
\begin{description}
\item[(i)] There exists positive constants $\epsilon$ such that for all solutions $x$ of Equation~\eqref{eqn:system}, and for all $ k \ge 0$,
\[
V(x(t_{k+1}))<e^{-2\alpha T_k}V(x(t_k))-\epsilon \norm{x(t_k)}^2.
\]
\item[(ii)] There exists a positive constant $\delta$ and continuously differentiable functions $Q_k: [0,T_k] \times \mathcal{C}[0,T_k] \rightarrow \mathbb R$ which satisfy the following for all $k \ge 0$.
\begin{equation}
Q_k(T_{k},z)=e^{-2\alpha T_k}Q_k(0,z)\quad \text{ for all } z \in\mathcal{C}[0,t_k]\label{Th12}
\end{equation}
and such that for all solutions of Equation~\eqref{eqn:system}, and for all $t \in [t_k,t_{k+1}]$
\begin{align}
\frac{\textrm d}{\textrm dt} \left[V(x(t))+Q_k(t-t_k,x_{k})\right]+2\alpha V(x(t)) + 2\alpha Q_k(t-t_k,x_{k})<-\delta\|x_{k}\|_\infty.\label{Th13}
\end{align}
\end{description}
Moreover, if either of these statements is satisfied, then System~\eqref{eqn:system} is globally exponentially stable about the
origin with decay rate $\gamma = \alpha$.
\end{theorem}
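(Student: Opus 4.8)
The plan is to prove the two implications of the equivalence separately and then to extract exponential stability from statement (i). Throughout, on a fixed interval $[t_k,t_{k+1}]$ I abbreviate the augmented function $W(t) := V(x(t)) + Q_k(t-t_k,x_k)$, and the central device is the weighted quantity $t \mapsto e^{2\alpha(t-t_k)}W(t)$, whose time-derivative is precisely $e^{2\alpha(t-t_k)}\bigl[\dot W(t) + 2\alpha W(t)\bigr]$, i.e.\ the exact combination appearing on the left of~\eqref{Th13}.

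For (ii)$\Rightarrow$(i), I would note that~\eqref{Th13} makes this derivative strictly negative, so integrating from $t_k$ to $t_{k+1}$ yields
\[
e^{2\alpha T_k} W(t_{k+1}) - W(t_k) < -\delta \|x_k\|_\infty \frac{e^{2\alpha T_k}-1}{2\alpha}.
\]
Writing $W$ at the two endpoints and substituting the boundary relation~\eqref{Th12}, $Q_k(T_k,x_k)=e^{-2\alpha T_k}Q_k(0,x_k)$, the two copies of $Q_k(0,x_k)$ cancel, leaving exactly $e^{2\alpha T_k}V(x(t_{k+1})) - V(x(t_k))$ on the left. Dividing by $e^{2\alpha T_k}$, using the pointwise comparison $\|x_k\|_\infty \ge \|x_k(0)\| = \|x(t_k)\|$ together with~\eqref{Th11}, and absorbing the strictly positive factor $(e^{2\alpha T_k}-1)/2\alpha$, which is bounded uniformly away from $0$ for $T_k\in[T_{\min},T_{\max}]$ by compactness, produces a single $\epsilon>0$ valid for all admissible $T_k$, giving~(i).

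For the converse (i)$\Rightarrow$(ii), the idea is to construct $Q_k$ explicitly so that $e^{2\alpha(t-t_k)}W(t)$ becomes affine in $t$. Setting $Q_k(0,x_k)=0$ and demanding $e^{2\alpha(t-t_k)}W(t) = V(x_k(0)) + c_k\,(t-t_k)$ with slope $c_k := \bigl(e^{2\alpha T_k}V(x(t_{k+1})) - V(x(t_k))\bigr)/T_k$ forces the definition
\[
Q_k(s,x_k) := e^{-2\alpha s}\bigl(V(x_k(0)) + c_k s\bigr) - V(x_k(s)),
\]
a genuine functional of $s$ and of the trajectory segment $x_k$. One checks directly that $Q_k(0,x_k)=0$ and $Q_k(T_k,x_k)=0$, so~\eqref{Th12} holds, and that $\dot W + 2\alpha W = e^{-2\alpha s}c_k$. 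Statement~(i) makes $c_k$ strictly negative, and since $e^{-2\alpha s}\le 1$ this is bounded above by $e^{-2\alpha T_k}c_k<0$; converting $\|x(t_k)\|^2$ into the supremum norm via the flow bound $\|x_k(s)\| \le M\|x(t_k)\|$, available from continuity of $\Gamma$ on the compact set $[0,T_{\max}]$, and choosing $\delta>0$ small and $T_k$-uniform then yields~\eqref{Th13}. Finally, the stability conclusion follows from~(i) alone: dropping the $-\epsilon\|x(t_k)\|^2$ term gives $V(x(t_{k+1})) < e^{-2\alpha T_k}V(x(t_k))$, and telescoping over $k$ with $t_k=\sum_{j<k}T_j$ produces $V(x(t_k)) < e^{-2\alpha t_k}V(x_0)$; for arbitrary $t\in[t_k,t_{k+1}]$ I bound $\|x(t)\|^2\le M^2\|x(t_k)\|^2$, pass through~\eqref{Th11}, and use $t_k \ge t - T_{\max}$ to recover $\|x(t)\| \le K\|x_0\|e^{-\alpha t}$, so $\gamma=\alpha$.

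I expect the main obstacle to be the converse direction, specifically the bookkeeping that reconciles the pointwise quantity $\|x(t_k)\|^2$ in~(i) with the supremum norm in~\eqref{Th13}: this is exactly where regularity of the flow map over the bounded sampling window is essential, and it is what forces the uniform bound $T_k\le T_{\max}$ to enter. Care is also needed to match the homogeneity of the slack terms (reading the right-hand side of~\eqref{Th13} as a quadratic negative term, consistent with the quadratic sandwich~\eqref{Th11} on $V$). The affine-interpolation construction of $Q_k$ is the conceptual crux, but once the slope $c_k$ is identified the verification of~\eqref{Th12} and~\eqref{Th13} is routine.
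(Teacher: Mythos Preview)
The paper does not prove this theorem: it is imported verbatim from~\cite{seuret:auto:2010}, so there is no in-paper argument to compare against. The only trace the paper leaves of the original proof is the remark, immediately after the statement, that the choice $Q(s,z)=F(s,z(0),z(T_k),z(s))$ ``comes from the proof of Theorem~\ref{thm:Lyapunov} and is non-conservative.'' Your explicit construction
\[
Q_k(s,x_k)=e^{-2\alpha s}\bigl(V(x_k(0))+c_k s\bigr)-V(x_k(s)),\qquad c_k=\frac{e^{2\alpha T_k}V(x_k(T_k))-V(x_k(0))}{T_k},
\]
depends on exactly $s$, $x_k(0)$, $x_k(T_k)$ and $x_k(s)$, so your affine-interpolation idea is consistent with (and very likely is) the construction the paper is alluding to.

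Two points deserve tightening. First, the homogeneity mismatch you flag is real: as written, \eqref{Th13} carries $-\delta\|x_k\|_\infty$ while (i) carries $-\epsilon\|x(t_k)\|^2$, and no amount of bookkeeping turns a linear slack into a quadratic one globally; you should treat \eqref{Th13} as $-\delta\|x_k\|_\infty^2$ (which is how it is used downstream in the SOS conditions and how it is stated in the source paper) rather than try to bridge the gap. Second, the bound $\|x_k(s)\|\le M\|x(t_k)\|$ with $M$ independent of the initial point does not follow merely from continuity of $\Gamma$ on $[0,T_{\max}]$; for a general nonlinear $f$ this is a genuine growth hypothesis (it holds, for instance, if $f$ is globally Lipschitz, via Gronwall). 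You invoke it twice, once in (i)$\Rightarrow$(ii) and once in the stability conclusion, so it should be stated as an assumption alongside global existence rather than derived from compactness of the time interval alone.
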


Note that the function $Q$ is an operator on an infinite-dimensional vector space. Parametrization of a dense subspace of such operators is impossible using digital computation. However, in this paper, we avoid this difficulty by choosing the operator $Q$ to have the form of $Q(s,z) = F(s,z(0),z(T_k),z(s))$. This choice for the structure of $Q$ comes from the proof of Theorem~\ref{thm:Lyapunov} and is non-conservative.

\subsection{Sum-of-Squares Optimization}
Theorem~\ref{thm:Lyapunov} reduces the question of global exponential stability of sampled-data systems to the existence of a Lyapunov function $V$ and a piecewise-continuous `spacing function' $Q$, which jointly satisfy certain pointwise constraints. Specifically, using the structure $Q_k(s,z) = F_k(s,z(0),z(T_k),z(s))$, we require
\begin{align*}
Q_k(T_{k},z)&= F_k(T_k,z(0),z(T_k),z(T_k))\\
& = e^{-2\alpha T_k} F_k(0,z(0),z(T_k),z(0))\\
&= e^{-2\alpha T_k}Q_k(0,z)
\end{align*}
and
\begin{align*}
&\frac{\textrm d}{\textrm dt} \left[V(x(t))+Q_k(t-t_k,x_{k})\right]+2\alpha V(x(t)) + 2\alpha Q_k(t-t_k,x_{k})\\
&=\nabla V(x)^T f(x(t),x(t_k))  + \nabla_x F_k(t-t_k,x(t),x(t_{k+1}),x(t_k))^T f(x)\\
& \qquad + \frac{d}{dt}F_k(t-t_k,x(t),x(t_{k+1}),x(t_k)) +2\alpha V(x(t))\\
&\qquad  + 2\alpha F_k(t-t_k,x(t),x(t_{k+1}),x(t_k))<-\delta\|x(t)\|
\end{align*}
for all $x(t),x(t_{k+1}),x(t_k)\in\R^n$ and $t-t_k \in [0,T_k]$.

To find the functions $F_k$ and $V$ and enforce these constraints, we must optimize functional variables subject to positivity constraints. While this is a very difficult form of optimization, there has been recent progress in this area through the use of sum-of-squares variables. Specifically, we assume the functions $F$ and $V$ are polynomials of bounded degree. The vector space of polynomials of bounded degree is finite dimensional and can be represented using e.g. a set of monomial basis functions. Specifically, if we define the vector of monomials in variables $x$ of degree $d$ or less as $Z_d(x)$, then we can assume that $F_k$ has the form
\[
F_k(s,x,y,z) = c^T Z_d(s,x,y,z)
\]
for some vector $c \in \R^n$. To enforce the positivity constraints, we assume that any positive polynomial, $h$ can be represented as the sum of squared polynomials as
\[
h(x) = \sum_i g_i(x)^2.
\]
While this assumption is somewhat conservative, the conservatism is not significant, as Sum-of-Squares polynomials are known to be dense in the set of positive polynomials. The key advantage to requiring positive polynomials to be sum-of-squares is that the set of sum-of-squares polynomials of bounded degree is precisely parameterized by the set of positive semidefinite matrices with size corresponding to the degree of the polynomials. That is, a polynomial $y(x) = c^T Z_{2d}(x)$ is SOS if and only if
\[
y(x) =c^T Z_{2d}(x)= Z_{d}(x)^T Q Z_d(x)
\]
for some positive semidefinite matrix $Q$ and where recall $Z_d$ is the vector of monomials in variables $x$ of degree $d$ or less. Thus the constraint that $y$ be a SOS polynomial is equivalent to a set of linear equality constraints between the variables $c$ and $Q$, as well as the constraint that $Q \ge 0$.  Thus optimization of SOS polynomials is actually a form of semidefinite programming - for which we have efficient numerical algorithms and implementations - e.g.~\cite{sturm_1999,borchers1999csdp}.

\textbf{Notation:} We denote the constraint that a polynomial $p$ be Sum-of-Squares as $p\in \Sigma_s$.

While polynomials which are SOS will always be globally positive, we occasionally would like to search for polynomials which are only positive on a subset of $\R^n$. This is typically accomplished through the use of SOS multipliers, formalized through certain `Positivstellensatz' results.
\begin{lemma}\label{lem:PS}
Suppose that there exists polynomials $t_i$ and SOS polynomial $s_i \in \Sigma_s$ such that
\[
v(x) = s_0(x) + \sum_i s_i(x)g_i(x)+\sum_j t_i(x) h_j(x)
\]
Then $v(x)\ge 0$ for any $x \in X:=\{x \in \R^n \,:\, g_i(x)\ge 0,\, h_i(x)=0\}$.
\end{lemma}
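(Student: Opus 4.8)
The plan is to prove the claim by direct pointwise evaluation, since the algebraic certificate on the right-hand side is constructed precisely so that every summand is sign-definite on the constraint set $X$. First I would fix an arbitrary point $x \in X$, so that by definition $g_i(x) \ge 0$ for every $i$ and $h_j(x) = 0$ for every $j$. The entire argument then reduces to checking that each of the three types of terms in the decomposition of $v$ is nonnegative at this particular $x$, and summing them.

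For the first term, $s_0 \in \Sigma_s$ means $s_0 = \sum_k q_k^2$ for some polynomials $q_k$, so that $s_0(x) = \sum_k q_k(x)^2 \ge 0$ as a sum of squares of real numbers; this holds at every point of $\R^n$ and in particular on $X$. For the products $s_i(x) g_i(x)$, the multiplier $s_i \in \Sigma_s$ is again nonnegative everywhere by the same argument, while $g_i(x) \ge 0$ because $x \in X$; hence each product is a product of two nonnegative reals and is itself nonnegative. For the equality constraints, the terms $t_j(x) h_j(x)$ vanish at $x$ because $h_j(x) = 0$, so here no sign condition on the multipliers $t_j$ is needed and they may be arbitrary polynomials. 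Adding the three nonnegative (respectively zero) contributions gives $v(x) \ge 0$, and since $x \in X$ was arbitrary the conclusion follows.

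There is no genuine obstacle here: the lemma is a one-directional soundness statement, asserting only that the existence of such a certificate is \emph{sufficient} for nonnegativity on $X$, and the proof is a routine verification using the defining property of sum-of-squares polynomials. The subtle content of Positivstellensatz theory — namely the converse question of whether every polynomial nonnegative on $X$ admits a representation of this form, and with what degree bounds — is exactly what is \emph{not} claimed here, and is in general false without additional compactness or Archimedean hypotheses on $X$. For the purposes of this chapter only the easy direction is required, since it is this direction that justifies replacing a local positivity constraint by a searchable SOS feasibility problem.
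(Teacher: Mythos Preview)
Your proof is correct and complete; the lemma is indeed the easy (soundness) direction and is established exactly by the pointwise evaluation you describe. The paper itself does not give a proof of this lemma at all---it is stated as a standard background result, with the harder converse direction deferred to the Positivstellensatz references---so your argument supplies what the paper omits.
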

Thus if we can represent the subset of interest $X$ as a semialgebraic set, then we can enforce positivity on this set using SOS and polynomial variables. Note that $v$ in Lemma~\ref{lem:PS} is not itself a Sum-of-Squares.

As an example, if we wish to enforce positivity of $F_k(s,x,y,z)$ on the interval $s \in [0,T_k]$, then we can search for $SOS$ functions $s_0,s_1$ such that
\[
F_k(s,x,y,z) = s_0(s,x,y,z) + s_1(s,x,y,z)g(s)
\]
where $g(s) = s(T_k-s)$. This function $g$ was chosen because $s\in [0,T_k]$ if any only if $g(s) \ge 0$. Positivstellensatz results~\cite{stengle_1973,schmudgen_1991,putinar_1993} give conditions under which Lemma~\ref{lem:PS} is not conservative.

Polynomial positivity and Sum-of-Squares have been studied for some time. For additional information, we refer the reader to the references~\cite{reznick_2000,parrilo_2000,lasserre_2006,chesi_2005,peet_2009SICON}.

\section{Main Results}\label{sec:main}
Now that we have described our approach, the main results of the paper follow directly. We will describe both the synchronous and asynchronous cases and consider global exponential stability. Note that in the following theorems, we restrict $Q$ to have the structure
\[
Q_k(s,z) = F_k(s,x(0),x(s)).
\]
That is, there is no dependence on $x(T_k)$. This was done in order to be consistent with our approach to linear Sampled-Data systems described in~\cite{seuret_2013TAC} and also to reduce the computational complexity of the stability conditions. This restriction may, however, introduce additional conservatism and should be considered carefully by the user.

\subsection{The Synchronous Case}

We first consider stability in the `synchronous' case - that is, when $T_i=T_j$ for all $i,j>0$. In this case, the updates to the state occur after regular intervals. As we have argued before, this case is often unrealistic. However, there exist certain scenarios where this model is relevant - such as in the case of an A/D converter. Synchronous sampled-data systems are well-represented by conversion to a discrete-time system as the resulting state update law
\[
x_{k+1} = f(x_k)
\]
will not depend on $k$. However, as we mentioned before, derivation and stability analysis of the resulting nonlinear discrete-time system are still difficult problems. For this reason and others, the method we outline in this section will not rely on conversion to discrete-time, but will use SOS programming to perform global exponential stability analysis while retaining the full hybrid model of the dynamics.

\begin{theorem}\label{thm:synch}
Suppose there exist polynomials $V$, $F$, $s_0$, and $s_1$ such that
\begin{align}
& V(x)-\mu_1\norm{x}^2 \in \Sigma_s\label{eqn:cdn1}\\
&\nabla V(z)^T f(z,x)  + \nabla_z F(t,x,z)^T f(z,x) + \frac{d}{dt}F(t,x,z) +2\alpha V(z) + 2\alpha F(t,x,z) \notag \\
&\qquad \qquad =-s_0(t,x,z) - s_1(t,x,z) t(T-t)\label{eqn:cdn2}\\
&F(T,x,y) = e^{-2\alpha T} F(0,x,x)\label{eqn:cdn3}
\end{align}
then if $T_k=T$ for all $k>0$, System~\ref{eqn:system} is globally exponentially stable.
\end{theorem}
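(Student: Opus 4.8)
The plan is to recognize the sum-of-squares certificates (\ref{eqn:cdn1})--(\ref{eqn:cdn3}) as a computational encoding of condition (ii) of Theorem~\ref{thm:Lyapunov}, so that global exponential stability follows at once from that theorem's ``moreover'' clause (inheriting its standing assumption of global existence and continuity of solutions). Since we are in the synchronous case $T_k=T$, none of the data depends on $k$, and I would fix the spacing operator as $Q_k(s,z)=F(s,z(0),z(s))$ for every $k$, matching the restricted structure announced just before the theorem. Under this identification the three displayed conditions line up one-to-one with the quadratic bounds (\ref{Th11}) on $V$, the endpoint relation (\ref{Th12}), and the dissipation inequality (\ref{Th13}).

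First I would clear the two routine items. Condition (\ref{eqn:cdn1}) asserts $V(x)-\mu_1\norm{x}^2\in\Sigma_s$, hence $V(x)\ge\mu_1\norm{x}^2$ for all $x$, giving the lower bound of (\ref{Th11}); taking $V$ to be a positive-definite quadratic form (as in the linear theory of~\cite{seuret_2013TAC}) supplies the matching upper bound $V(x)\le\mu_2\norm{x}^2$ and closes (\ref{Th11}). For the endpoint relation, I evaluate $Q_k(T,z)=F(T,z(0),z(T))$ and $Q_k(0,z)=F(0,z(0),z(0))$ and substitute $x=z(0)$, $y=z(T)$ into (\ref{eqn:cdn3}); this yields $F(T,z(0),z(T))=e^{-2\alpha T}F(0,z(0),z(0))$, which is exactly (\ref{Th12}).

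The core is (\ref{eqn:cdn2}). Using the chain-rule expansion displayed in the previous subsection together with the substitutions $z=x(t)$, $x=x(t_k)$, and $t\mapsto t-t_k$, its left-hand side is precisely the derivative expression on the left of (\ref{Th13}). Because $s_0,s_1\in\Sigma_s$ are nonnegative and the multiplier $t(T-t)$ is nonnegative exactly on the sampling interval $t\in[0,T]$, the right-hand side $-s_0-s_1\,t(T-t)$ is nonpositive there, which establishes the dissipation inequality along every solution. With (\ref{Th11}), (\ref{Th12}), and (\ref{Th13}) verified, Theorem~\ref{thm:Lyapunov} yields global exponential stability with rate $\gamma=\alpha$.

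The step I expect to be the main obstacle is upgrading the nonpositive bound $-s_0-s_1\,t(T-t)\le0$ supplied by the SOS representation to the strict margin $<-\delta\norm{x_k}_\infty$ that (\ref{Th13}) demands. At the endpoints $t=0$ and $t=T$ the multiplier term vanishes, so the strict margin can only come from $s_0$; I would therefore build a strictly positive-definite term into the certificate, for instance by requiring $s_0(t,x,z)-\delta\norm{(x,z)}^2\in\Sigma_s$ (and likewise a strict lower bound in (\ref{eqn:cdn1})) before imposing the equalities. Once that definite margin is present, the chain-rule substitutions and the endpoint evaluation above are routine, and the appeal to Theorem~\ref{thm:Lyapunov} completes the argument.
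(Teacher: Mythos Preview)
Your approach is the paper's: set $Q_k(s,z)=F(s,z(0),z(s))$, read the lower bound on $V$ from (\ref{eqn:cdn1}), the endpoint relation (\ref{Th12}) from (\ref{eqn:cdn3}), and the dissipation inequality on $[0,T]$ from (\ref{eqn:cdn2}) via Lemma~\ref{lem:PS}, then invoke Theorem~\ref{thm:Lyapunov}. Two minor divergences: for the upper bound in (\ref{Th11}) the paper does not restrict $V$ to be quadratic as you do, but simply observes that any polynomial satisfies $V(x)\le\mu_2\norm{x}^p$ for some $p$ (tacitly relaxing the quadratic hypothesis of Theorem~\ref{thm:Lyapunov}), so your restriction is unnecessary and slightly narrows the statement; and the paper's proof stops at $\le 0$ in the dissipation step without addressing the strict margin $<-\delta\norm{x_k}_\infty$ that you correctly flag, so on that point your proposal is in fact more careful than the original.
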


\begin{proof}
Using $V$ as given and $Q_k(t,z) = F(t,z(0),z(t))$ for all $k>0$, we first get from Condition~\eqref{eqn:cdn1} that
\[
V(x(t))-\mu_1\norm{x(t)}^2\ge 0
\] and hence
\[
V(x(t))\ge \mu_1\norm{x(t)}^2.
\]
Furthermore, since $V$ is a polynomial, it is upper bounded by some function $\mu_2 \norm{x}^p$ for sufficiently large $\mu_2$ and $p$.

Next, we see that from Condition~\eqref{eqn:cdn3},
\begin{align*}
Q_k(T_{k},z)&=F(T,z(0),z(T))\\
&= e^{-2\alpha T} F(0,z(0),z(0))\\
& =e^{-2\alpha T_k}Q_k(0,z).
\end{align*}

Finally, we have from Condition~\eqref{eqn:cdn2} and Lemma~\ref{lem:PS} that
\begin{align*}
&\frac{\textrm d}{\textrm dt} \left[V(x(t))+Q_k(t-t_k,x_{k})\right]+2\alpha V(x(t)) + 2\alpha Q_k(t-t_k,x_{k})\\
&=\nabla V(x(t))^T f(x(t),x(t_k))  + \nabla_3 F(t,x(t_k),x(t))^T f(x(t),x(t_k))\\
&\qquad \qquad  + \nabla_1 F(t,x(t_k),x(t)) +2\alpha V(x(t)) + 2\alpha F(t,x(t_k),x(t))\\
&\le 0
\end{align*}
for all $s \in [0,T]$. Thus the conditions for exponential stability in Theorem~\ref{thm:Lyapunov} are satisfied. We conclude that System~\eqref{eqn:system} is stable if $T=T_k$ for all $k>0$.
\end{proof}

\subsection{The Asynchronous Case}

In this Subsection, we consider the case when the sampling period is time-varying, yet is known to lie within some interval $[T_{\min}, T_{\max}]$. To illustrate, suppose that during a Denial-of-Service attack the rate of controller updates is reduced, but still does not drop below the rate of one packet per second. Thus implies a maximum sampling period of $T_{\max} = 1s$. However, it is possible and even likely that the duration between most of the updates updates during and after the attack may be significantly less that this $T_{\max}$. Hence, there is also a minimum sample time determined to be either $T_{\min}=0$ or possibly to be the communication delay between controller and system if the application is tele-operation.

To address the problem where we have $T_k \in [T_{\min}. T_{\max}]$, we allow the `spacing function' $F$ to vary with $T_k$. This is allowable since the spacing function is not part of the storage function, $V$.

Note that we do not allow $V$ to be a function of $T_k$. The restriction that $V$ not vary with $k$ is similar to the Quadratic Stability condition for general classes of switched systems. However, which quadratic stability is known to be conservative for general classes of hybrid system, for sampled-data systems it is not known whether quadratic stability is conservative.

\begin{theorem}\label{thm:asynch}
Suppose there exist polynomials $V$, $F$, $s_0$, and $s_1$ such that
\begin{align}
& V(x)-\mu_1\norm{x}^2 \in \Sigma_s\label{eqn:cdn1v2}\\
&\nabla V(z)^T f(z,x)  + \nabla_z F(t,x,z,T)^T f(z,x) + \frac{d}{dt}F(t,x,z,T) +2\alpha V(z) \notag \\
&\qquad \qquad \qquad \qquad  + 2\alpha F(t,x,z,T)\notag \\
&=-s_0(t,x,z,T) - s_1(t,x,z,T) t(T-t) - s_2(t,x,z,T)(T-T_{\min})(T_{\max}-T)\label{eqn:cdn2v2}\\
&F(T,x,y,T) = e^{-2\alpha T_{\max}} F(0,x,x)\label{eqn:cdn3v2}
\end{align}
then if $T_k\in [T_{\min},T_{\max}]$ for all $k>0$, System~\eqref{eqn:system} is globally exponentially stable.
\end{theorem}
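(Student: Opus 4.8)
The plan is to follow the template of the proof of Theorem~\ref{thm:synch}, the only essential novelty being that the spacing function now carries the sampling period as a parameter and that the admissible range of $T_k$ must itself be encoded as a semialgebraic constraint. Concretely, I would keep $V$ as given and define, for each $k$, the operator $Q_k(s,z) = F(s,z(0),z(s),T_k)$, and then verify the three hypotheses of Theorem~\ref{thm:Lyapunov} one at a time, taking care that every inequality holds uniformly in $k$. Since $V$ does not depend on $k$, the growth bounds come for free: Condition~\eqref{eqn:cdn1v2} gives $V(x)\ge\mu_1\norm{x}^2$, and because $V$ is a polynomial it admits an upper bound of the form $\mu_2\norm{x}^p$, both valid over the entire sequence of intervals.

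Next I would verify the derivative condition~\eqref{Th13}. Expanding $\frac{d}{dt}[V(x(t))+Q_k(t-t_k,x_k)]+2\alpha V + 2\alpha Q_k$ reproduces the left-hand side of~\eqref{eqn:cdn2v2} under $t\mapsto t-t_k$ and $T\mapsto T_k$. The point of~\eqref{eqn:cdn2v2} is that it is a polynomial identity in the independent variables $(t,x,z,T)$; by Lemma~\ref{lem:PS}, the two multiplier terms $s_1\,t(T-t)$ and $s_2\,(T-T_{\min})(T_{\max}-T)$ force this expression to be nonpositive precisely on the set where $t\in[0,T]$ and $T\in[T_{\min},T_{\max}]$. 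Evaluating along a trajectory with $t-t_k\in[0,T_k]$ and $T_k\in[T_{\min},T_{\max}]$ then yields~\eqref{Th13}. The genuinely new ingredient relative to Theorem~\ref{thm:synch} is exactly the multiplier $s_2$, which certifies nonpositivity uniformly across the whole admissible band of sampling periods rather than at a single fixed value of $T$.

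The step I expect to be the main obstacle is the boundary condition~\eqref{Th12}. In the synchronous case $T$ was a fixed constant, so $e^{-2\alpha T}$ was merely a scalar and~\eqref{eqn:cdn3} was an honest polynomial identity. Here $T$ is a variable, $e^{-2\alpha T}$ is transcendental, and hence cannot appear in a polynomial (SOS) constraint; this is why~\eqref{eqn:cdn3v2} replaces it with the constant $e^{-2\alpha T_{\max}}$. Substituting $T=T_k$ into~\eqref{eqn:cdn3v2} gives $Q_k(T_k,z)=e^{-2\alpha T_{\max}}Q_k(0,z)$ rather than the $e^{-2\alpha T_k}Q_k(0,z)$ demanded by~\eqref{Th12}. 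I would therefore integrate the certified inequality $\frac{d}{dt}(e^{2\alpha(t-t_k)}W)\le 0$ (with $W=V+Q_k$) across one interval to get $W(t_{k+1})\le e^{-2\alpha T_k}W(t_k)$, and then carefully track the residual $(e^{-2\alpha T_k}-e^{-2\alpha T_{\max}})Q_k(0,x_k)$ that fails to cancel.

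Because $T_k\le T_{\max}$ this residual factor is nonnegative, so the delicate part is controlling $Q_k(0,\cdot)$ so that the net per-step contraction $V(x(t_{k+1}))\le e^{-2\alpha T_k}V(x(t_k))$ nonetheless survives and can be chained into condition~(i) of Theorem~\ref{thm:Lyapunov} with a decay rate uniform in $k$; this is precisely the place where the conservatism introduced by the $e^{-2\alpha T_{\max}}$ surrogate must be shown to be harmless. Once that reconciliation is in hand, the conditions of Theorem~\ref{thm:Lyapunov} hold for all $k$, and global exponential stability for every admissible sequence $T_k\in[T_{\min},T_{\max}]$ follows. This is the only genuinely more-than-transcription part of the asynchronous argument.
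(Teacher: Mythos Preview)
Your plan is the paper's plan: define $Q_k(s,z)=F(s,z(0),z(s),T_k)$, read off the lower growth bound from~\eqref{eqn:cdn1v2}, and invoke Lemma~\ref{lem:PS} with the two multipliers $s_1,s_2$ to certify the derivative inequality on the semialgebraic set $\{t\in[0,T],\ T\in[T_{\min},T_{\max}]\}$, exactly as you describe. On all of this you and the paper coincide word for word.

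The divergence is at the boundary condition, and here you are actually more scrupulous than the paper. You correctly note that~\eqref{eqn:cdn3v2} delivers the constant factor $e^{-2\alpha T_{\max}}$, whereas~\eqref{Th12} asks for $e^{-2\alpha T_k}$, and you propose to integrate the dissipation inequality and absorb the residual $(e^{-2\alpha T_k}-e^{-2\alpha T_{\max}})\,Q_k(0,x_k)$. The paper does none of this: it simply writes
\[
Q_k(T_k,z)=F(T_k,z(0),z(T_k),T_k)=e^{-2\alpha T_k}F(0,z(0),z(0),T_k)=e^{-2\alpha T_k}Q_k(0,z),
\]
silently replacing $T_{\max}$ by $T_k$ as though~\eqref{eqn:cdn3v2} already matched~\eqref{Th12}. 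So the issue you flag is real, but the paper's own proof glosses over it rather than resolving it. Your proposed workaround is not completed either---you never supply the sign or size control on $Q_k(0,\cdot)$ that would render the residual harmless, and without an additional hypothesis on $F$ there is no reason $Q_k(0,x_k)\le 0$. In short: your approach is the paper's approach; the obstacle you single out is genuine; neither you nor the paper actually closes it.
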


\begin{proof}
The proof is similar to the synchronous case. We use $V(x)$ as given and define $Q_k(t,z) = F(t,z(0),z(t),T_k)$ for all $k>0$. From Condition~\eqref{eqn:cdn1v2} we have that
\[
V(x(t))\ge \mu_1\norm{x(t)}^2.
\]
As before, since $V$ is a polynomial, it is upper bounded by some function $\mu_2 \norm{x}^p$ for sufficiently large $\mu_2$ and $p$.

Next, we see that from Condition~\eqref{eqn:cdn3v2},
\begin{align*}
Q_k(T_{k},z)&=F(T_k,z(0),z(T_k),T_k)\\
&= e^{-2\alpha T_k} F(0,z(0),z(0),T_k)\\
& =e^{-2\alpha T_k}Q_k(0,z).
\end{align*}

Finally, we have from Condition~\eqref{eqn:cdn2v2} and Lemma~\ref{lem:PS} that
\begin{align*}
&\frac{\textrm d}{\textrm dt} \left[V(x(t))+Q_k(t-t_k,x_{k})\right]+2\alpha V(x(t)) + 2\alpha Q_k(t-t_k,x_{k})\\
&=\nabla V(x(t))^T f(x(t),x(t_k))  + \nabla_3 F(t,x(t_k),x(t),T_k)^T f(x(t),x(t_k))\\
 &\qquad + \nabla_1 F(t,x(t_k),x(t),T_k) +2\alpha V(x(t)) + 2\alpha F(t,x(t_k),x(t),T_k)\\
&\le 0
\end{align*}
for all $s \in [0,T_k]$ and $T_k \in [T_{\min},T_{\max}]$. Thus the conditions for exponential stability in Theorem~\ref{thm:Lyapunov} are satisfied. We conclude that System~\eqref{eqn:system} is stable if $T_k \in [T_{\min},T_{\max}]$ for all $k>0$.
\end{proof}

\section{Numerical Examples}\label{sec:numeric}

To verify the algorithms described above, we performed  global stability analysis on a set of nonlinear sampled-data systems. In the examples considered here, we let $\alpha \cong 0$, meaning that we are not interested in finding exponential rates of decay. For a study of estimating exponential rates of decay as a function of sampling period for linear systems, we refer to~\cite{seuret_2013TAC}.

\subsection{Example 1:}
For our first set of numerical examples, we consider the class of 1-D nonlinear dynamical systems parameterized by
\[
\dot x(t) = f(x(t))=ax(t)^3 + bx(t)^2+cx(t)
\]
where we assume that the $u(t) = cx(t)$ term represents negative feedback. Without sampling, we know this system is globally stable if and only if $x(t)f(x(t)) > 0$ for all $x \neq 0$. It can be shown that this condition is satisfied if and only if $a<0$ and $c<\frac{b^2}{4a}$. For this example, we initially chose $a = -1$, $b = 2$ and $c=-1.1$. Then, we used a sampled signal for the term $u(t)=c x(t_k)$ to get the following dynamics.
\[
\dot x(t) = -x(t)^3 + 2x(t)^2-1.1x(t_k).
\]
In Table~\ref{tab:results1}, we list the maximum verifiably globally stable sampling period for this system as a function of the polynomial degree used for the variables $V, F$ and $s_1$. These results were obtained using the conditions of Theorem~\ref{thm:synch} implemented using SOSTOOLS coupled with SeDuMi. Due to the known potential for numerical inaccuracies, all solutions were verified a-posteriori using SOS and via simulation. The resulting Lyapunov function and function $F$ are illustrated over a single sampling period in Figure~\ref{fig:1}. The evolution of the system can be seen over multiple sampling periods in Figure~\ref{fig:2}.
%
\begin{figure}
\centering
\includegraphics[width=.8\textwidth]{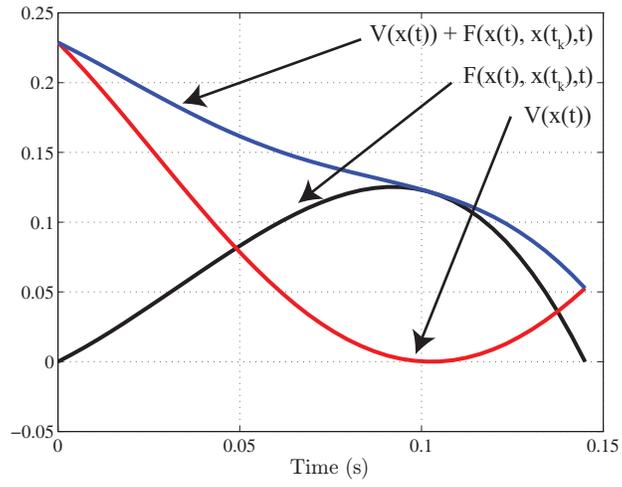}
\caption{Evolution of $V$ and $F$ over one sampling period for Numerical Example 1}
\label{fig:1}       
\end{figure}
\begin{figure}
\centering
\includegraphics[width=.8\textwidth]{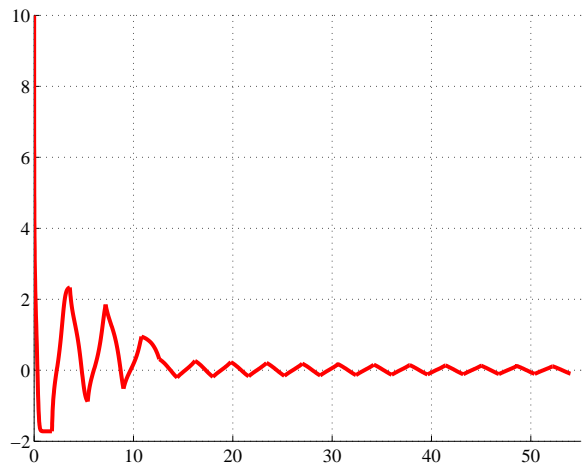}
\caption{Evolution of $x(t)$ over 30 sampling periods for Numerical Example 1 with $T_s = 1.8$}
\label{fig:2}       
\end{figure}

\subsection{Example 2:}
In our second example, we consider a controlled model of a jet engine with dynamics
\begin{align*}
\dot x(t)&=-y(t_k)-\frac{3}{2}x(t)^2 - \frac{1}{2}x(t)^3\\
\dot y(t)&=-y(t)+x(t)
\end{align*}
We consider the case where the negative feedback to the first state is provided using a sampled-data controller. When $T_s=0$, this system is known to be globally stable. Figure~\ref{fig:3} illustrates the trajectories of this system plotted against the level set of one such Lyapunov function for $T_s = .4$.

\begin{figure}
\centering
\includegraphics[width=.8\textwidth]{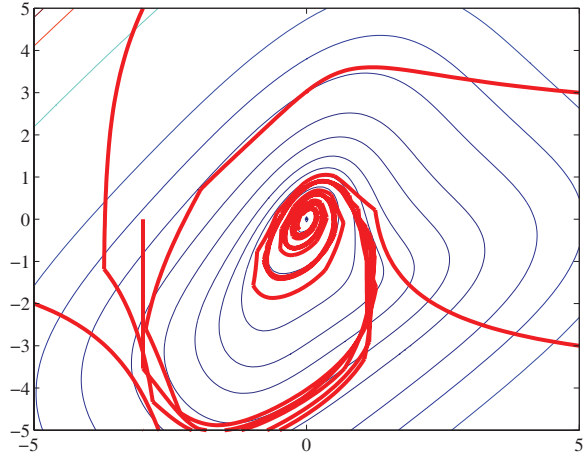}
\caption{Level Sets of a Lyapunov function for Example 2, with multiple trajectories simulated over 30 sampling periods.}
\label{fig:3}       
\end{figure}

\begin{table}
\begin{center}
\begin{tabular}{|c|c|c|c|c|c|}
\hline
Degree & $N=2$ & $N=4$ & $N=6$ &$N=8$  &  $N=10$ \\
\hline
\hline
\hspace{2mm}Example 1: Maximum Synchronous $T_s$ \hspace{2mm}&\hspace{2mm} $ \emptyset$\hspace{2mm} & \hspace{2mm}$ 0.7901$ \hspace{2mm}&\hspace{2mm} $1.5449$ \hspace{2mm}&\hspace{2mm} $1.8192$ \hspace{2mm}&\hspace{2mm}$1.8411$\hspace{2mm}\\
\hline
\hline
\hspace{2mm}Example 2: Maximum Synchronous $T_s$ \hspace{2mm}&\hspace{2mm} $ \emptyset$ \hspace{2mm}&\hspace{2mm} $.171$ \hspace{2mm}&\hspace{2mm} $.4599$ \hspace{2mm}&\hspace{2mm} N/A \hspace{2mm}&\hspace{2mm} N/A\\
\hline
\hline
\hspace{2mm}Example 3: Maximum Asynchronous $T_s$ \hspace{2mm}&\hspace{2mm} $ \emptyset$ \hspace{2mm}&\hspace{2mm} $.7891$ \hspace{2mm}&\hspace{2mm} $1.542$ \hspace{2mm}&\hspace{2mm} N/A \hspace{2mm}&\hspace{2mm} N/A\\
\hline
\end{tabular}\caption{Maximum allowable sampling period $T_s$
for Examples $1$, $2$, and $3$ with $T_1=0$.}\label{tab:results1}
\end{center}
\end{table}

\subsection{Example 3:}
In this example, we revisit the dynamics of Example 1.
\[
\dot x(t) = f(x(t))=ax(t)^3 + bx(t)^2+cx(t)
\]
However, in this case, we are interest in the case where the sampling period is unknown and time-varying with upper and lower bounds, $T_k \in [T_{\min},T_{\max}]$. Specifically, we choose the lower bound to be $T_{\min}=0$ and determine the maximum upper bound $T_{\max}$ for which stability is retained for all time-varying sampling periods which satisfy $T_k \in [T_{\min},T_{\max}]$. The results are listed in Table~\ref{tab:results1}. As we can see, in this example, allowing the sampling period to vary with time does not significantly affect the maximum sampling period - a surprising result which indicates that using a Lyapunov function $V$ which does not depend on $T_k$ may not be conservative.

\section{Conclusion}
In this chapter, we have studied the question of global stability of nonlinear sampled-data systems in both the synchronous and asynchronous cases. These systems arise through the use of digitized sensing and actuation to control continuous-time dynamics where the controller updates may be irregular. Our approach has been to exploit a new type of slack variable to find Lyapunov functions which experience net decrease over each sampling period, but may be instantaneously increasing at certain points in time. The stability conditions are implemented using a new form of optimization (Sum-of-Squares) which allows us to search for polynomial functions which satisfy pointwise positivity constraints. The result is a convex algorithm which is able to assess global stability of nonlinear vector fields with sampled-data signals in both the asynchronous and the synchronous cases. The effectiveness of the algorithm is demonstrated on several numerical examples.

\begin{acknowledgement}
This work was supported by the National Science Foundation under Grants CMMI 110036 and CMMI 1151018.
\end{acknowledgement}
%

%
%
%
%

\end{document}